\newtheorem{thm}{Theorem}
\newtheorem*{thm*}{Theorem}
\newtheorem*{thm:bTh}{Theorem \ref{thm:bTh}}
\newtheorem*{thm:big1}{Theorem \ref{thm:big1}}
\newtheorem{lemma}[thm]{Lemma}
\theoremstyle{definition}
\newtheorem{definition}[thm]{Definition}
\theoremstyle{remark}
\newtheorem{remark}[thm]{Remark}
\newtheorem{theorem}[thm]{Theorem}
\numberwithin{equation}{section}
\def\theenumi{\arabic{enumi}}
\def\theenumii{\alph{enumii}}
\def\p@enumii{\theenumi.}
\def\theenumiii{\arabic{enumiii}}
\def\p@enumiii{(\theenumi)(\theenumii)}
\def\p@enumiv{\p@enumiii.\theenumiii}
\newcommand{\N}{\mathbb{N}}
\begin{document}

\title[Generalizing the Futurama Theorem]{Generalizing the Futurama Theorem}
\author{J. Elder} 
\address{Department of Mathematics. Arizona State University. Tempe, AZ 85287.}
\email{Jennifer.E.Elder@asu.edu}

\author{O. Vega} 
\address{Department of Mathematics. California State University, Fresno. Fresno, CA 93740.}
\email{ovega@csufresno.edu}

\thanks{The first author thanks the support of the Division of Graduate Studies at California State University, Fresno in the form of the Robert \& Norma Craig Graduate Scholarship and also for travel support.  She would also like to thank the College of Science and Mathematics at California State University, Fresno for travel support.}
\subjclass[2010]{Primary 20B30; Secondary 20F05}
\keywords{Permutations, products of cycles, Futurama}

\begin{abstract}
The 2010 episode of \textit{Futurama} titled \textit{The Prisoner of Benda} centers around a machine that swaps the brains of any two people who use it. The problem is, once two people use the machine to swap brains with each other, they cannot swap back. 

In this article, we present a new proof of this theorem and also a generalization of it to what would happen if, instead, the machine swapped cyclically the brains of $p$ characters, where $p$ is prime.
\end{abstract} 

\maketitle

\section{Introduction}

In the 2010 Writers Guild Award-winning episode of \textit{Futurama}, \textit{The Prisoner of Benda} \cite{WGA}, Professor Farnsworth and Amy build a machine that can swap the brains of any two people. The two use the machine to swap brains with each other, but then discover that once two people have swapped with each other, the machine does not swap them back. More characters get involved until the group is thoroughly mixed up, and they start looking for ways to return to their own heads.

Clearly, the problem of undoing what the machine has done may be studied using permutations; each brain swap can be described by a transposition in $S_n$, where $n$ is the number of characters involved in the brain-swapping.  Hence, the problem consists in writing the inverse of a permutation as a product of transpositions that (1) are all distinct, and (2) were not already used in constructing the original permutation. 

The solution given in the episode relies on involving two additional people; these two characters were not involved in any of the original swaps, and so have no restrictions on who they can swap with now. For example, using the additional people $x$ and $y$, we have
\[
(1~2)^{-1}  =  (x~y)(2~x)(1~y)(2~y)(1~x)
\]

Note that the transpositions used in the product above are all distinct, and that none of them is an element of $S_2$, which is where the original element $(1~2)$ was taken from. This can be done in general for any permutation, and leads to the following theorem.

\begin{theorem}[Keeler, 2010]\label{cyclesthm}
Let $n\in \N$, $n\geq 2$. The inverse of any permutation in $S_n$ can be written as a product of distinct transpositions in $S_{n+2} \setminus S_n$.
\end{theorem}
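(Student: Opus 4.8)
The plan is to reduce the general statement to the case of a single cycle and then reassemble, exploiting that the only transposition that two different cycles could both demand is $(x\,y)$. First I would observe that a transposition lies in $S_{n+2}\setminus S_n$ precisely when it moves $x$ or $y$, so the admissible transpositions are exactly $(i\,x)$ and $(i\,y)$ for $1\le i\le n$ together with $(x\,y)$. Writing an arbitrary $\sigma\in S_n$ as a product of disjoint cycles $\sigma=c_1\cdots c_m$ (discarding fixed points), the supports of the $c_t$ are pairwise disjoint, so the admissible transpositions used to treat different cycles---those of the form $(a\,x),(a\,y)$ with $a$ in a given support---are automatically all distinct. The sole transposition that could be required by two different cycles is $(x\,y)$, and controlling its use is where the argument must be careful.

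The key lemma is that for a single $k$-cycle $c=(a_1\ a_2\ \cdots\ a_k)$ with $k\ge 2$, the permutation $c^{-1}(x\,y)$ can be written as a product of distinct transpositions, each of the form $(a_i\,x)$ or $(a_i\,y)$ and none equal to $(x\,y)$. Granting this, the reassembly is clean: if $M_t$ denotes such a product for $c_t$, then since $(x\,y)$ commutes with every $c_s^{-1}$ (disjoint supports) one gets $M_1M_2\cdots M_m=c_1^{-1}\cdots c_m^{-1}(x\,y)^m=\sigma^{-1}(x\,y)^m$. When $m$ is even this already equals $\sigma^{-1}$, and when $m$ is odd one appends a single $(x\,y)$; either way $(x\,y)$ is used at most once and all remaining factors are distinct because the supports are disjoint. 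This is exactly the mechanism by which the two helpers may be recycled across all cycles.

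To build the single-cycle word I would start from the identity $(x\,a_1)(x\,a_2)\cdots(x\,a_k)=(x\ a_k\ a_{k-1}\ \cdots\ a_1)$, which threads the helper $x$ through the cycle and produces $c^{-1}$ with $x$ ``spliced in'' between $a_1$ and $a_k$; the defect is precisely that $x\mapsto a_k$ and $a_1\mapsto x$ in place of $a_1\mapsto a_k$ and $x\mapsto x$. Repairing this by a single $(x\,a_k)$ is illegal, since that transposition has already been used, so the role of the second helper $y$ is to effect the same repair---together with the swap $x\leftrightarrow y$ demanded by the target $c^{-1}(x\,y)$---through crossings that introduce no repeated edge. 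I would fix the exact order by matching the case $k=2$, where one checks $(a_2\,x)(a_1\,y)(a_2\,y)(a_1\,x)=(a_1\ a_2)(x\,y)$, and then propagate the pattern, verifying by induction on $k$ that the interleaved $x$- and $y$-threads compose to $c^{-1}(x\,y)$.

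The main obstacle is this last step: producing an explicit, provably distinct ordering of the transpositions $(a_i\,x),(a_i\,y)$ whose product is exactly $c^{-1}(x\,y)$. Two features make it delicate. First, distinctness must be maintained even though a single helper unavoidably forces a repeated correction, so the interleaving with $y$ has to be arranged so that every crossing uses a fresh edge. Second, a parity count shows that $c^{-1}(x\,y)$ is even or odd according to the parity of $k$, so the number of edges used must match $k \bmod 2$; this rules out the naive symmetric formulas and dictates which edges, if any, are omitted. Once the single-cycle word is pinned down and its distinctness verified, the cycle-combination argument above completes the proof.
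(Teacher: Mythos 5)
Your overall architecture is sound, and it is in fact closer to Keeler's original argument than to the paper's: you reduce to a single cycle, allow each cycle to cost one extra factor of $(x\,y)$, and then cancel these factors in pairs using disjointness, appending one final $(x\,y)$ if the number of cycles is odd. That reassembly step is correct as you state it, and your parity bookkeeping is also correct. (The paper instead proves the single-cycle case by exhibiting explicit words $\delta_m$ and splitting into cases according to $k \bmod 2$ and $k \bmod 3$, and it removes duplicate occurrences of $(x\,y)$ by conjugating a factor by $(x\,y)$ rather than by pairing; either mechanism works.)

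However, there is a genuine gap at the core of your proposal: the key lemma --- that for every $k$-cycle $c=(a_1\,\cdots\,a_k)$ the permutation $c^{-1}(x\,y)$ is a product of \emph{distinct} transpositions of the form $(a_i\,x)$ or $(a_i\,y)$ --- is only verified for $k=2$. You explicitly defer the general case to ``propagating the pattern by induction on $k$,'' but no induction step is given, and the natural one does not go through: writing $(a_1\,\cdots\,a_{k+1})^{-1}(x\,y)=\bigl[(a_1\,\cdots\,a_k)^{-1}(x\,y)\bigr](a_1\,a_{k+1})$ leaves you needing to absorb the inadmissible transposition $(a_1\,a_{k+1})$, and expanding it as $(a_1\,x)(a_{k+1}\,x)(a_1\,x)$ reuses edges already present in the shorter word. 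What is actually needed is an explicit word verified directly, e.g.
\[
c^{-1}(x\,y)=(y\,a_1)(x\,a_{i+1})(y\,a_k)(y\,a_{k-1})\cdots(y\,a_{i+1})(x\,a_i)(x\,a_{i-1})\cdots(x\,a_1)
\]
for any fixed $1\le i\le k-1$, whose $k+2$ factors are pairwise distinct and avoid $(x\,y)$; one then checks by tracing each element that the product is $c^{-1}(x\,y)$. Until such a word is written down and verified (or the paper's case analysis via the $\delta_m$ is substituted), the proof is incomplete precisely at the step you yourself identify as the main obstacle.
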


Although Ken Keeler (writer and executive producer for Futurama at the time) never published the proof of his theorem, the proof given in the episode as well as a write up of the idea may be found in \cite{FT}. This was, most probably, the first time in which a theorem was proved for the purpose to advance the narrative of a TV show, and thus it was all over the internet \cite{Gz, AMS, IS} and was also mentioned in books, such as Simon Singh's \emph{The Simpsons and Their Mathematical Secrets} \cite{SS}. Of course, this aroused the interest of mathematicians (the authors of this article included), and so in 2014, Evans, Huang, and Nguyen \cite{EHN}  proved that Keeler's solution to the problem is optimal in the sense that it uses the minimal number of cycles and the minimal number of additional elements. In that article, they also gave necessary and sufficient conditions on $m$ and $n$ for the identity permutation to be expressible as a product of $m$ distinct transpositions in $S_n$.

Although in the title of this article we used \emph{The Futurama Theorem}, for the remainder of  this work we will refer to Theorem \ref{cyclesthm} as \emph{Keller's Theorem}.

\section{Reproving Keeler's Theorem} 

We want to reprove Keeler's Theorem using a different, and possibly less optimal, method. The goal is to present a method that can be generalized to larger cycles in $S_n$.  As in Keeler's proof, we will need additional elements that could not have been used in the permutation we wish to work with. 

As expected, we will first study how to write the inverse of a cycle using `new' transpositions to later putting all this together using that every permutation can be written as the product of disjoint cycles.

\begin{definition}
Fix $n\in \N$ and let $x, y\in \N$ be such that $x, y \gg n$. We consider the set that we will use to define the elements of $S_{n+2}$ to be $\{1,2,\ldots , n, x,y\}$. 
\end{definition}

Throughout this paper, we will consider, without loss of generality, the $k$-cycle $(1~2~\cdots ~ k)$ instead of a generic $k$-cycle. We start by setting some notation.

\begin{definition} \label{deltadef}
Let $m\in \N$ such that $m$ is odd and $m\leq n-2$. And let $\delta_m\in S_{n+2}$ be defined as 
\[
\delta_m = \prod_{i=1 \atop {i \ \text{odd}}}^{m}(i+1~y)(i+2~y)(i~x)(i+1~x).
\]
\end{definition}

Notice that $\delta_m$, for any $1\leq m$, will not contain any repeated transpositions. In fact, every number $1\leq j \leq m$ will appear at most twice in the product: once in $(j~x)$ and once in $(j~y)$. Also, we can see that $(x~y)$ is not in the product. 

\begin{lemma}\label{lem1}
Let $k\geq 7$ be odd and such that $k\equiv 1 \pmod 3$. If $\sigma$ is a $k$-cycle then  $\sigma^{-1} = \delta_{k-2}$. 
\end{lemma}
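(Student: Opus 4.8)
The plan is to reduce the claim to understanding a single factor of the product and then to follow how the factors interact through the shared symbols $x$ and $y$. First I would multiply out the four transpositions indexed by a fixed odd $i$: a direct computation shows that $(i+1~y)(i+2~y)(i~x)(i+1~x)$ is the $5$-cycle $\gamma_i=(i~x~y~(i+2)~(i+1))$ on $\{i,i+1,i+2,x,y\}$. Thus $\delta_{k-2}$ is a composition of the cycles $\gamma_1,\gamma_3,\dots,\gamma_{k-2}$, and the point is that consecutive $\gamma_i$ overlap both in the number $i+2$ and in the two auxiliary symbols $x,y$, which act as a ``carry'' passed from one factor to the next. Note also that the number of factors is $(k-1)/2$; since $k$ is odd and $k\equiv 1\pmod 3$ we have $k\equiv 1\pmod 6$, so this count is a multiple of $3$, a fact I expect to be decisive.

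Next I would compute the action of $\delta_{k-2}$ by tracking each element of $\{1,\dots,k,x,y\}$ through the factors in order. For a number $j$ with $2\le j\le k$ the claim is that $\delta_{k-2}(j)=j-1$: the element $j$ is fixed by every factor until the first one that moves it, and a short parity check (distinguishing $j$ even from $j$ odd) shows that this factor sends $j$ to $j-1$ and that no later factor disturbs $j-1$ --- crucially, $j$ is decremented before reaching the factor $\gamma_j$ that would route it into the carry. The element $1$ is the exception: $\gamma_1$ sends it into $x$, after which it rides the carry, visiting $x$, then $y$, then a number, and returning to $x$, with period $3$ in the factors. Because the number of factors is divisible by $3$, this relay deposits $1$ onto the number $k$ at the very last factor, rather than leaving it stranded at $x$ or $y$. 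The same periodicity shows $x$ and $y$ are returned to themselves; the cleanest way to see this is to group the factors into consecutive triples $\gamma_i\gamma_{i+2}\gamma_{i+4}$, each of which one checks directly to fix both $x$ and $y$, so their composition does as well.

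Assembling these facts gives $\delta_{k-2}=(1~k~(k-1)~\cdots~2)$, which fixes $x$ and $y$ and equals $\sigma^{-1}$ for $\sigma=(1~2~\cdots~k)$, as claimed. The main obstacle is the bookkeeping of the carry: because the factors are not disjoint and communicate through $x$ and $y$, one cannot treat them independently, and I expect the heart of the write-up to be a clean induction on the number of factors (with three cases according to the residue of that number modulo $3$) that simultaneously tracks the image of $1$ and the positions of $x$ and $y$. Verifying that the hypothesis $k\equiv 1\pmod 3$ is exactly what makes the carry close up --- so that $x,y$ are fixed and $1\mapsto k$ --- is the crux on which the whole identity turns.
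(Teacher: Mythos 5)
Your argument is correct, but it takes a genuinely different route from the paper's. You compute $\delta_{k-2}$ directly: you identify each block $(i+1~y)(i+2~y)(i~x)(i+1~x)$ as the $5$-cycle $\gamma_i=(i~x~y~(i+2)~(i+1))$ and then track every element of $\{1,\dots,k,x,y\}$ through the $(k-1)/2$ blocks, showing that each $j\geq 2$ is decremented exactly once and then left untouched, while $1$, $x$ and $y$ ride a period-$3$ relay through the auxiliary symbols that closes up precisely because $k\equiv 1\pmod 6$ makes the number of blocks a multiple of $3$. I verified the block computation and the orbit tracking; both are right, including the key point that an odd $j$ is sent to $j-1$ by $\gamma_{j-2}$ before $\gamma_j$ could route it into $x$, and that each triple $\gamma_{i+4}\gamma_{i+2}\gamma_i$ fixes $x$ and $y$. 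The paper instead argues by induction on $t$ where $k=1+6t$: it verifies $k=7$ by direct multiplication and then splits $(1~2~\cdots~j)=(1~2~\cdots~j-6)(j-6~\cdots~j)$, so that the inverse of the appended $7$-cycle contributes exactly the three new blocks that extend $\delta_{j-8}$ to $\delta_{j-2}$. The paper's induction is shorter to write down rigorously, but your computation is more explanatory: it proves the $k=7$ base case rather than merely asserting it, and it makes visible why the hypothesis $k\equiv 1\pmod 6$ is exactly what is needed (the carry through $x$ and $y$ must complete a whole number of its period-$3$ cycles), a fact the induction exploits only implicitly through the step size $6$. To finish your write-up you would only need to formalize the bookkeeping, e.g.\ by the induction on the number of factors (split by residue modulo $3$) that you sketch at the end.
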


\begin{proof} 
We write $k = 1+6t$, for $t\in \N$. We will prove the result by induction on $t$. For $t=1$, we get:
\[
(1~2~3~4~5~6~7)^{-1} =  (6~9)(7~9)(5~8)(6~8)(4~9)(5~9)(3~8)(4~8)(2~9)(3~9)(1~8)(2~8) = \delta_{5}
\]

We now assume the result for all $k$-cycles, where $k=1+6s$ and $s<t$. Now let $\sigma$ be a $j$-cycle, where $j=1+6t$. We let $k=j-6$ and write
\begin{equation}\label{eqn1}
(1~2~\cdots ~j)^{-1} =(j-6~j-5~j-4~j-3~j-2~j-1~j)^{-1}  (1~2~\cdots ~k)^{-1}
\end{equation}

From the induction hypothesis, we get
\[
(1~2~\cdots ~k)^{-1}=\delta_{j-8},
\]
and from the base case we get 
\[
(j-6~j-5~j-4~j-3~j-2~j-1~j)^{-1}  =  \prod_{i=j-6 \atop {i \ \text{odd}}}^{j-2}(i+1~y)(i+2~y)(i~x)(i+1~x).
\]

Putting all this  together in Equation (\ref{eqn1}) yields:
\[
(1~2~\cdots ~j)^{-1}  = \left( \prod_{i=j-6 \atop {i \ \text{odd}}}^{j-2}(i+1~y)(i+2~y)(i~x)(i+1~x) \right)  \delta_{j-8}  = \delta_{j-2},
\]
which is what we wanted to obtain.
\end{proof}

We will use this lemma to prove all remaining cases. First for $k$ odd.

\begin{lemma}\label{lem2}
Let $k$ be an odd number and $\sigma$ be a $k$-cycle.
\begin{enumerate}
\item If $k\geq 5$ and $k\equiv 2 \pmod 3$, then $\sigma^{-1} = (x~y)(k-2~x)(k-1~x)(k~x)\delta_{k-4}$. 
\item If $k\geq 3$ and $k\equiv 0 \pmod 3$, then $\sigma^{-1} = (x~y)(k~x)\delta_{k-2}$. 
\end{enumerate}
\end{lemma}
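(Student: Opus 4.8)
The plan is to deduce both parts directly from Lemma~\ref{lem1} by peeling a single short cycle off the top of $\sigma=(1~2~\cdots~k)$, rather than running a fresh induction. The engine is the cycle factorization already used in the proof of Lemma~\ref{lem1}: for any $2\le \ell\le k$ one has
\[
(1~2~\cdots~k) = (1~2~\cdots~(k-\ell+1))\,\big((k-\ell+1)~(k-\ell+2)~\cdots~k\big),
\]
where the two factors overlap only in the entry $k-\ell+1$, as one checks by tracking images exactly as for Equation~(\ref{eqn1}). Taking inverses reverses the order of the two factors.

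For part (2), where $k\equiv 0\pmod 3$, I would take $\ell=3$, so the residual cycle $(1~2~\cdots~(k-2))$ has odd length $k-2\equiv 1\pmod 3$. When $k\ge 9$ this length is at least $7$, so Lemma~\ref{lem1} gives $(1~2~\cdots~(k-2))^{-1}=\delta_{k-4}$ and hence $\sigma^{-1}=\big((k-2)~(k-1)~k\big)^{-1}\delta_{k-4}$. Since $\delta_{k-2}$ differs from $\delta_{k-4}$ only in the extra block for $i=k-2$, namely $(k-1~y)(k~y)(k-2~x)(k-1~x)$, proving $\sigma^{-1}=(x~y)(k~x)\delta_{k-2}$ reduces, after cancelling the common right factor $\delta_{k-4}$, to the finite identity
\[
\big((k-2)~(k-1)~k\big)^{-1} = (x~y)(k~x)(k-1~y)(k~y)(k-2~x)(k-1~x).
\]
This is the $k=3$ statement $(1~2~3)^{-1}=(x~y)(3~x)\delta_1$ shifted up by $k-3$, and it simultaneously serves as the base case $k=3$ (where the residual cycle is trivial).

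Part (1), with $k\equiv 2\pmod 3$, is the same argument with $\ell=5$: the residual cycle $(1~2~\cdots~(k-4))$ has odd length $k-4\equiv 1\pmod 3$, so for $k\ge 11$ Lemma~\ref{lem1} yields $(1~2~\cdots~(k-4))^{-1}=\delta_{k-6}$. Writing $\delta_{k-4}$ as $\delta_{k-6}$ times its top block for $i=k-4$ and cancelling $\delta_{k-6}$, the claim collapses to the shifted $k=5$ identity
\[
\big((k-4)~\cdots~k\big)^{-1}=(x~y)(k-2~x)(k-1~x)(k~x)(k-3~y)(k-2~y)(k-4~x)(k-3~x),
\]
which doubles as the base case $k=5$. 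So the whole proof comes down to the overlapping factorization plus verification of the two base identities for $k=3$ and $k=5$.

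The main obstacle is the direct verification of those two base identities: one must compose the listed transpositions in the correct order and confirm both that the resulting permutation acts on $\{k-2,k-1,k\}$ (respectively $\{k-4,\dots,k\}$) as the cycle inverse \emph{and}, crucially, that the auxiliary symbols $x$ and $y$ are returned to themselves, since otherwise the cancellation against $\delta$ would be illegitimate. Once the reading convention for products of transpositions is fixed, this is a short computation of the image of each of $x$, $y$ and the finitely many moved points; the shift-invariance of the identity under the relabelling $j\mapsto j+c$ (which fixes $x,y$) then lets one computation cover every $k$ in the relevant residue class.
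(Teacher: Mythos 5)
Your proposal is correct and is essentially the paper's own argument: the authors likewise verify the base identities for $k=3$ and $k=5$ directly, peel off the trailing $3$-cycle (resp.\ $5$-cycle) via $(1~2~\cdots~k)=(1~2~\cdots~k-2)(k-2~k-1~k)$ (resp.\ the length-$5$ analogue), and apply Lemma~\ref{lem1} to the remaining cycle of length $\equiv 1 \pmod 3$. Your write-up is merely more explicit about cancelling the common $\delta$-factor and checking that the shifted base identity fixes $x$ and $y$, a detail the paper leaves implicit.
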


\begin{proof}
Let $k$ be an odd number and $\sigma$ be a $k$-cycle.
\begin{enumerate}
\item For $k=5$, we see that
\[
(1~2~3~4~5)^{-1}  =  (6~7)(3~6)(4~6)(5~6)(2~7)(3~7)(1~6)(2~6)
\]

For $k>5$ and $k\equiv 2\pmod 3$, we write
\[
(1~2~\ldots k) = (1~2~\ldots k-4) (k-4~k-3~k-2~k-1~k)
\]

Since $k-4$ is  odd and $k-4 \equiv 1\pmod 3$ we use Lemma \ref{lem1} to get the desired result.

\item For $k=3$, we have
\[
(1~2~3)^{-1} = (4~5)(3~4)(2~5)(3~5)(1~4)(2~4),
\]
and when $k>3$ and $k\equiv 0\pmod 3$, we know that $k-2 \equiv 1\pmod 3$, with $k-2$ odd. Hence
\[
(1~2~\ldots k) = (1~2~\ldots k-2) (k-2~k-1~k).
\]
\end{enumerate}
\hspace{.5in}At this point we use Lemma \ref{lem1} to get the desired result.
\end{proof}

Now we will proceed in a similar manner for the cases where $k$ is even.

\begin{lemma}\label{lem4}
Let $k$ be a positive even number, and  $\sigma$ be a $k$-cycle.
\begin{enumerate}
\item If $k\geq 6$  and $k\equiv 0 \pmod 3$, then $\sigma^{-1} =(k~y)(k-1~x)(k~x) \delta_{k-3}$. 
\item If $k\geq 4$ and $k\equiv 1 \pmod 3$, then $\sigma^{-1} = (x~y)(k-1~x)(k~x)\delta_{k-3}$. 
\item If $k\geq 8$  and $k\equiv 2 \pmod 3$, then 
\[
\sigma^{-1} =(k-2~y)(k-1~y)(k~y)(k-3~x)(k-2~x) \delta_{k-5}.
\]
\end{enumerate}
\end{lemma}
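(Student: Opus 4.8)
The plan is to dispatch all three congruence classes by the same device used for the odd cases in Lemma~\ref{lem2}: peel off a short cycle at the top and reduce the remaining long cycle to Lemma~\ref{lem1}. For any $1 \le m \le k$ one has the factorization $(1~2~\cdots~k) = (1~2~\cdots~m)(m~m+1~\cdots~k)$, whence
\[
\sigma^{-1} = (m~m+1~\cdots~k)^{-1}\,(1~2~\cdots~m)^{-1}.
\]
I would take $m = k-3$ for part~(2), $m = k-5$ for part~(1), and $m = k-7$ for part~(3). In each case a residue check shows that $m$ is odd with $m \equiv 1 \pmod 3$, and that $m \ge 7$ as soon as $k$ exceeds the stated minimum, so Lemma~\ref{lem1} gives $(1~2~\cdots~m)^{-1} = \delta_{m-2}$. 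The trailing factor is then a cycle of length $4$, $6$, or $8$, respectively.

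Next I would compute the inverse of the trailing cycle. The cycle $(m~m+1~\cdots~k)$ is obtained from $(1~2~\cdots~\ell)$, with $\ell = k-m+1 \in \{4,6,8\}$, by the relabeling $j \mapsto j+(m-1)$ that fixes $x$ and $y$; since relabeling is a conjugation it carries identities to identities, so it suffices to record the decomposition of the $\ell$-cycle once. These three small decompositions — for the $4$-, $6$-, and $8$-cycles — are exactly the minimal cases $k=4$, $k=6$, $k=8$ of the lemma, which I would verify directly by multiplying out the transpositions. Shifting each of them back up by $m-1$ produces the claimed leading block of transpositions (namely $(x~y)(k-1~x)(k~x)$, $(k~y)(k-1~x)(k~x)$, or $(k-2~y)(k-1~y)(k~y)(k-3~x)(k-2~x)$) followed by one or two of the elementary blocks $(i+1~y)(i+2~y)(i~x)(i+1~x)$ occurring in the definition of $\delta$.

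Substituting into the displayed factorization then leaves an expression of the form (leading block)$\cdot$(one or two $\delta$-blocks)$\cdot\,\delta_{m-2}$, and the whole argument comes down to checking that those extra blocks are precisely the ones indexed $m, m+2, \ldots$ that splice onto $\delta_{m-2}$ to produce $\delta_{k-3}$ in parts~(1) and~(2), and $\delta_{k-5}$ in part~(3). This splicing is the main obstacle: it is the same nontrivial identity that drives the induction in Lemma~\ref{lem1} — appending a run of consecutive blocks to a $\delta$ enlarges it — and making it go through requires tracking the block indices exactly and confirming that no transposition repeats at the seam between the leading block, the new blocks, and $\delta_{m-2}$. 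Once the indices are seen to line up, each of the three formulas follows.
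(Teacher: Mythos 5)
Your proposal is correct and follows essentially the same route as the paper: the paper also peels off a trailing cycle of length $4$, $6$, or $8$ via $(1~2~\cdots~k)=(1~2~\cdots~m)(m~m+1~\cdots~k)$ with $m=k-3$, $k-5$, $k-7$ respectively, applies Lemma~\ref{lem1} to the initial segment, and handles the short trailing cycle by the base cases $k=4,6,8$. You spell out the block-splicing onto $\delta_{m-2}$ more explicitly than the paper does, but the argument is the same.
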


\begin{proof}
Let $k$ be even, and  $\sigma$ be a $k$-cycle.
\begin{enumerate}
\item For $k=6$, we see that
\[
(1~2~3~4~5~6)^{-1} =  (6~8)(5~7)(6~7)(4~8)(5~8)(3~7)(4~7)(2~8)(3~8)(1~7)(2~7)
\]

For $k>6$, and $k\equiv 0\pmod 3$, we write 
\[
(1~2~\ldots k) = (1~2~\ldots k-5) (k-5~k-4~k-3~k-2~k-1~k)
\]

Since $k-5$ is odd and $k-5 \equiv 1\pmod 3$ we can use Lemma \ref{lem1} to get the desired result

\item For $k=4$, we have
\[
(1~2~3~4)^{-1} =  (5~6)(3~5)(4~5)(2~6)(3~6)(1~5)(2~5)
\]

and when $k>4$ and $k\equiv 1 \pmod 3$, we know that $k-3 \equiv 1\pmod 3$, $k-4$ is odd. Hence
\[
(1~2~\ldots k) = (1~2~\ldots k-3) (k-3~k-2~k-1~k)
\]
At this point we use Lemma \ref{lem1} to get the desired result.

\item For $k=8$,
\[
\sigma^{-1} =(6~y)(7~y)(k~y)(5~x)(6~x) \delta_{3}
\]

When $k>8$ and $k\equiv 2 \pmod 3$, we write
\[
(1~2~\ldots k) = (1~2~\ldots k-7) (k-7~\ldots~k-1~k)
\]
\end{enumerate}
\hspace{.5in}Since $k-7 \equiv 1\pmod 3$, we use Lemma \ref{lem1} to get the desired result.
\end{proof}

\begin{remark}
Lemmas \ref{lem1}, \ref{lem2}, and \ref{lem4} allow us to write any given $k$-cycle as a product of transpositions. Moreover, such products consist of transpositions that are all distinct from each other, and each one of these $2$-cycles is an element in $S_{n+2} \setminus S_n$.
\end{remark}

Finally, given a permutation $\sigma \in S_n$, we write its inverse as a product of disjoint cycles, $\sigma^{-1} = \sigma_1 \sigma_2 \cdots \sigma_r$, and then we write each $\sigma_i$ as a product of transpositions in $S_{n+2} \setminus S_n$ using Lemmas \ref{lem1}, \ref{lem2}, and \ref{lem4}. It is easy to see that the only transposition that could appear more than once in this product is $(x~y)$. In this case, each $(x~y)$ would appear at the beginning of a product representing a $\sigma_i$ (see Lemmas \ref{lem2} and \ref{lem4}), but this situation is easy to fix as if $\tau$ is a permutation in $S_{n+2} \setminus S_n$ then the conjugation $(x~y)  \tau  (x~y)$ yields a permutation that is equal to $\tau$ where all the $x$s and the $y$s have been exchanged. Keeler's Theorem follows.

\section{Products of larger cycles} 

Now we are interested in learning what would happen if Professor Farnsworth and Amy had created a machine that swapped several brains in a cyclical way. For example, the machine would do
\[
\text{Professor} \longrightarrow \text{Amy}  \longrightarrow \text{Bender}   \longrightarrow \text{Leela}   \longrightarrow \text{Fry}  \longrightarrow \text{Professor} 
\]

The problem of undoing what the machine does in the example above would be easy if the machine did not have any `issues' and worked even when a set of five people sat in the machine for a second time;  we could use that $(1~2~3~4~5)^{-1} = (5~4~3~2~1)$ or that $(1~2~3~4~5)^{-1} = (1~2~3~4~5)^4$ to solve the problem. That would be too easy, there would be no TV network that would broadcast an episode featuring such a simple problem/solution (the existence of the show Scooby-Doo may go against this premise). Hence, to make this interesting, we will assume that the machine does not always work when a set of people who have already sat in the machine sits in it again. Our restriction will be technical, and so we will phrase it explicitly in the statement of our results.

So, for the rest of this article, we will assume that we are working with a machine that swaps cyclically the brains of $p$ people at a time, where $p$ is an odd prime. Note that now, in terms of permutations, our problem will reside in $A_n$, as the brain-swapping machine yields an odd cycle every time it functions.

We will start our study with what will end up being a special case: $p=3$.

\subsection{Products of $3$-cycles} 

It turns out that, in this case, we only need one extra character to undo the mess created by the machine.

\begin{lemma}\label{lem3cycles1}
Let $n\in \N$, $n>2$, and let $\sigma \in A_n$ be an odd cycle. Then, $\sigma^{-1}$ can be written as a product of $3$-cycles
\[
\sigma^{-1} = \tau_1 \tau_2 \cdots \tau_t
\]
where $\tau_i \in A_{n+1} \setminus A_n$,  for all $i=1,2,\ldots, t$, and $\tau_i \notin <\tau_j>$, for all $i\neq j$.
\end{lemma}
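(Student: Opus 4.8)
The plan is to reduce to the standard cycle and exhibit $\sigma^{-1}$ explicitly as a telescoping product of $3$-cycles, each built from a single new symbol $a := n+1$, so that $A_{n+1}$ acts on $\{1,\dots,n,a\}$ (using only one extra character, as promised). Writing, without loss of generality, $\sigma=(1~2~\cdots~k)$ with $k$ odd, the engine of the proof is the identity
\[
(1~2~\cdots~k) = (a~1~2)(a~2~3)(a~3~4)\cdots(a~k{-}1~k) = \prod_{i=1}^{k-1}(a~i~i{+}1).
\]
First I would establish this identity for every odd $k$ by induction in steps of two. The base case $(1~2~3)=(a~1~2)(a~2~3)$ is a direct check, and the inductive step rests on the elementary collapse $(a~i~i{+}1)(a~i{+}1~i{+}2)=(i~i{+}1~i{+}2)$, which lets me append two factors at a time:
\[
\prod_{i=1}^{k+1}(a~i~i{+}1)=(1~\cdots~k)\,(k~k{+}1~k{+}2)=(1~\cdots~k{+}2).
\]
The restriction to odd $k$ (equivalently, an even number $k-1$ of factors) is exactly what forces $a$ to return to itself across the whole product, matching the fact that $\sigma$ fixes $a$; the computation tracking the image of each symbol $1,\dots,k,a$ directly gives the same conclusion and could be used instead of the induction.

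Next I would invert term by term. Reversing the order and replacing each factor by its inverse yields
\[
\sigma^{-1} = (a~k~k{-}1)(a~k{-}1~k{-}2)\cdots(a~3~2)(a~2~1),
\]
a product of $t=k-1$ three-cycles $\tau_1,\dots,\tau_{k-1}$. Each $\tau_i$ moves $a=n+1$, hence lies in $A_{n+1}\setminus A_n$, which establishes the first required property.

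Finally, the independence condition $\tau_i\notin\langle\tau_j\rangle$ for $i\neq j$ reduces to a support count: since $\langle\tau_j\rangle=\{e,\tau_j,\tau_j^{-1}\}$ and every power of $\tau_j$ has the same support as $\tau_j$, it suffices to observe that the factor with parameter $m$ has support $\{a,m,m{+}1\}$, and $\{a,m,m{+}1\}=\{a,m',m'{+}1\}$ forces $m=m'$; thus distinct factors have distinct supports and cannot be powers of one another. I expect no deep obstacle here — the only genuine work is verifying the telescoping identity and keeping the indexing and the order-reversal-on-inversion straight, so I would present the identity carefully and let the two stated conditions follow as short consequences.
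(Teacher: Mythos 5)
Your proof is correct, and it shares the paper's overall strategy: reduce to the standard cycle, write it explicitly as a product of $3$-cycles each of which moves the single new symbol, and deduce both required properties from the fact that the factors have pairwise distinct supports (so no factor can be a power of another). The difference is in the explicit decomposition. The paper uses $(1~2~\cdots~k)=(x~k~1)(x~k-2~k-1)\cdots(x~3~4)(x~1~2)$, which pairs off $\{1,2\},\{3,4\},\ldots,\{k-2,k-1\}$ and closes up with $(x~k~1)$, for a total of $(k+1)/2$ factors; you use the telescoping identity $(1~2~\cdots~k)=(a~1~2)(a~2~3)\cdots(a~k-1~k)$, for a total of $k-1$ factors. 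Both families consist of $3$-cycles through the new point with pairwise distinct supports, so both satisfy the lemma; yours is roughly twice as long but is verified by the one-line collapse $(a~i~i+1)(a~i+1~i+2)=(i~i+1~i+2)$, while the paper's is more economical in the number of machine uses (which matters for the narrative, though not for the statement). A minor stylistic difference: the paper applies its identity directly to $\sigma^{-1}$, which is itself an odd cycle, whereas you factor $\sigma$ and invert term by term; both are equally valid.
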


\begin{proof}
For $k=3$ we get $(1~2~3) = (x~ 3~ 1)(x~ 1 ~ 2)$, and for every $3> k\leq n$, $k$ odd, and $x \gg n$, consider the product 
\[
(1~2~\cdots k-1 ~ k) = (x~ k ~ 1)(x~ k-2 ~ k-1) \cdots (x~ 3 ~ 4)(x~ 1 ~ 2).
\]

The result is now immediate, as the inverse of any odd cycle is also an odd cycle, and the $3$-cycles used in the products considered all fix different sets of elements.
\end{proof}

\begin{lemma}\label{lem3cycles2}
Let $\alpha, \beta \in S_n$, $n>2$, be two disjoint even cycles. Then, $(\alpha \beta)^{-1}$ can be written as a product of $3$-cycles
\[
(\alpha \beta)^{-1} = \tau_1 \tau_2 \cdots \tau_t,
\]
where $\tau_i \in A_{n+1} \setminus A_n$,  for all $i=1,2,\ldots, t$, and $\tau_i \notin <\tau_j>$, for all $i\neq j$.
\end{lemma}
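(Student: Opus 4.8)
The plan is to reduce the statement to a single explicit construction and then to a cycle decomposition in the same spirit as Lemma~\ref{lem3cycles1}. First I would note that since $\alpha$ and $\beta$ are disjoint, $(\alpha\beta)^{-1} = \alpha^{-1}\beta^{-1}$, and $\alpha^{-1},\beta^{-1}$ are again two disjoint even cycles; hence it suffices to prove that \emph{any} product of two disjoint even cycles can be written in the desired form, and then apply this to $\alpha^{-1}\beta^{-1}$. So, relabeling if necessary, I would write the two disjoint even cycles as $(a_1~a_2~\cdots~a_{2r})$ and $(b_1~b_2~\cdots~b_{2s})$ with $r,s\geq 1$, and let $x\gg n$ be the single extra symbol.

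The central idea is to splice the two even cycles into one \emph{odd} cycle using $x$ as a connector. Set
\[
\gamma = (a_1~a_2~\cdots~a_{2r}~x~b_1~b_2~\cdots~b_{2s}),
\]
a cycle of odd length $2r+2s+1$. A direct computation, tracking where each symbol is sent, then establishes the key identity
\[
(a_1~a_2~\cdots~a_{2r})(b_1~b_2~\cdots~b_{2s}) = (a_1~b_1~x)\,\gamma ,
\]
so that the product of the two even cycles differs from the single odd cycle $\gamma$ by exactly one $3$-cycle through $x$. This is the step that converts the ``forbidden'' individual even cycles (which, being odd permutations, can never be products of $3$-cycles) into something decomposable.

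Next I would decompose $\gamma$ itself into $3$-cycles that each contain $x$. The point is that $x$ sits in the \emph{interior} of $\gamma$, so I rotate the cycle to bring $x$ to the front: $\gamma = (x~b_1~\cdots~b_{2s}~a_1~\cdots~a_{2r})$, whose remaining support is an \emph{even} list $d_1,\ldots,d_{2q}$ with $q=r+s$, where $(d_1,\dots,d_{2q})=(b_1,\dots,b_{2s},a_1,\dots,a_{2r})$. A cycle of the form $(x~d_1~\cdots~d_{2q})$ factors as $(x~d_{2q-1}~d_{2q})\cdots(x~d_3~d_4)(x~d_1~d_2)$, which is exactly the pairing-with-$x$ scheme already used in Lemma~\ref{lem3cycles1}; here every factor genuinely contains $x$ precisely because $x$ is an endpoint and the number of remaining symbols is even. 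Combining this with the identity above yields an explicit product of $r+s+1$ three-cycles, each lying in $A_{n+1}\setminus A_n$.

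The hard part will be the bookkeeping needed to confirm the condition $\tau_i \notin \langle \tau_j\rangle$ for $i\neq j$. For $3$-cycles this is equivalent to checking that no two factors share the same $3$-element support, since a $3$-cycle and its inverse have the same support. The factors arising from $\gamma$ have pairwise disjoint non-$x$ pairs $\{d_{2i-1},d_{2i}\}$ and so are visibly distinct, and the only genuine verification is that the connector $(a_1~b_1~x)$, with support $\{a_1,b_1,x\}$, does not coincide with any factor $(x~d_{2i-1}~d_{2i})$. Since $b_1=d_1$ and $a_1=d_{2s+1}$ occupy non-consecutive positions in the list $d_1,\ldots,d_{2q}$, the pair $\{a_1,b_1\}$ is never one of the consecutive pairs $\{d_{2i-1},d_{2i}\}$, so all supports are distinct and the distinctness condition follows. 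I expect this support-disjointness check, together with verifying the two displayed identities, to be the only technical content; the remainder is a direct specialization of the construction in Lemma~\ref{lem3cycles1}.
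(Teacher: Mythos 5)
Your proof is correct, and the construction is genuinely different from the paper's. The paper peels one transposition off each even cycle, writing $\beta^{-1}\alpha^{-1} = (a_1~a_2)(b_1~b_2)\,(a_2~a_3\cdots a_r)(b_2~b_3\cdots b_s)$, replaces the product of the two transpositions by the four $3$-cycles $(b_2~b_1~x)(b_1~a_2~x)(a_2~a_1~x)(a_1~b_1~x)$, and then invokes Lemma~\ref{lem3cycles1} on the two leftover odd cycles. You instead splice the two even cycles into a single odd cycle $\gamma$ through $x$ and peel off a single connector $3$-cycle; your key identity $(a_1\cdots a_{2r})(b_1\cdots b_{2s}) = (a_1~b_1~x)\,\gamma$ does hold under the right-to-left composition convention the paper uses (as one checks by tracing $a_{2r}\mapsto x\mapsto a_1$, $b_{2s}\mapsto a_1\mapsto b_1$, and $x\mapsto b_1\mapsto x$), and your fan decomposition of $(x~d_1~\cdots~d_{2q})$ into $q$ three-cycles through $x$ is valid because the non-$x$ support has even length. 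Both arguments reduce the condition $\tau_i\notin\langle\tau_j\rangle$ to distinctness of supports, using that $\langle\tau_j\rangle=\{e,\tau_j,\tau_j^{-1}\}$ for a $3$-cycle. What your route buys: it is more economical (with cycle lengths $2r$ and $2s$ you use $r+s+1$ factors, versus $r+s+4$ for the paper's decomposition), and it makes the distinctness check essentially a one-line comparison of the pairs $\{d_{2i-1},d_{2i}\}$ against $\{a_1,b_1\}$, whereas the paper must also check its four connector cycles against each other and against the output of Lemma~\ref{lem3cycles1}. What the paper's route buys is that it reuses Lemma~\ref{lem3cycles1} verbatim as a black box rather than introducing a second (if closely related) fan identity. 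The only thing I would insist you add in a final write-up is the explicit verification of the displayed splicing identity, which you currently defer to ``a direct computation'': it is the crux of the argument.
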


\begin{proof}
Let $\beta^{-1}=(a_1~a_2 \cdots a_r)$ and $\alpha^{-1} = (b_1~b_2 \cdots b_s)$ be two disjoint cycles, where $r$ and $s$ are even. Note that $(\alpha \beta)^{-1}= \beta^{-1}\alpha^{-1} \in A_n$ can be written as
\begin{align*}
\beta^{-1} \alpha^{-1} &=   (a_1 ~a_2)  (b_1~b_2) ~ (a_2~a_3 \cdots a_{r}) (b_2~b_3 \cdots b_s) \\
&=   (b_2~b_1 ~x)  (b_1~a_2~x)(a_2~a_1~ x)(a_1~b_1~x) ~ (a_2~a_3 \cdots a_{r}) (b_2~b_3 \cdots b_s).
\end{align*}

Note that the last two cycles in the product are odd, and thus we can write them as a product of $3$-cycles, as indicated in Lemma \ref{lem3cycles1}. It is easy to see now that $(\alpha \beta)^{-1}$ can be written as claimed.
\end{proof}

The previous two lemmas indicate that we should only need one extra person to undo whatever chaos the  $3$-brain-swapping machine might have created. We make this result explicit in the following theorem.

\begin{theorem}\label{thm3cycles}
Let $\sigma \in A_n$, $n>2$. Then, $\sigma^{-1}$ can be written as a product of $3$-cycles
\[
\sigma^{-1} = \tau_1 \tau_2 \cdots \tau_t,
\]
where $\tau_i \in A_{n+1} \setminus A_n$,  for all $i=1,2,\ldots, t$, and $\tau_i \notin <\tau_j>$, for all $i\neq j$.
\end{theorem}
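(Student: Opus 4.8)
The plan is to combine Lemmas \ref{lem3cycles1} and \ref{lem3cycles2} through the disjoint cycle decomposition of $\sigma^{-1}$, and then to verify that the condition ``$\tau_i \notin \langle \tau_j\rangle$'' survives the concatenation of the pieces produced by those lemmas.

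First I would write $\sigma$ as a product of disjoint cycles and split these into the cycles of odd length and the cycles of even length; since disjoint cycles commute, the same splitting carries over to $\sigma^{-1}$. A cycle of odd length is an even permutation, whereas a cycle of even length is an odd permutation. Because $\sigma^{-1}\in A_n$ is even, the number $q$ of even-length cycles must be even. This parity observation is exactly what lets me pair the even-length cycles as $(\alpha_1,\beta_1),\ldots,(\alpha_{q/2},\beta_{q/2})$, each pair consisting of two disjoint even cycles. I would then apply the two lemmas factor by factor: Lemma \ref{lem3cycles1} rewrites the inverse of each odd-length cycle as a product of $3$-cycles in $A_{n+1}\setminus A_n$, and Lemma \ref{lem3cycles2} rewrites each $(\alpha_i\beta_i)^{-1}$ in the same way. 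Concatenating all these products (in any order, as the underlying cycles are disjoint and hence commute) gives an expression $\sigma^{-1} = \tau_1\tau_2\cdots\tau_t$ with every $\tau_i \in A_{n+1}\setminus A_n$.

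The main point to check — and the only real obstacle — is the condition $\tau_i\notin\langle\tau_j\rangle$ for all $i\neq j$, which for $3$-cycles says that no two of the $\tau$'s are equal or inverse to one another. Within a single invocation of Lemma \ref{lem3cycles1} or Lemma \ref{lem3cycles2} this is already guaranteed by the lemma itself. For two factors coming from \emph{different} invocations, I would use that each $\tau_i$, lying in $A_{n+1}\setminus A_n$, necessarily moves the extra element $x$ and is therefore of the form $(x~a~b)$ with $a,b\in\{1,\ldots,n\}$. Two such $3$-cycles $(x~a~b)$ and $(x~c~d)$ generate the same cyclic subgroup only when $\{a,b\}=\{c,d\}$. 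But factors from different invocations are built from disjoint cycles of $\sigma$, so the pairs $\{a,b\}$ and $\{c,d\}$ are drawn from disjoint subsets of $\{1,\ldots,n\}$ and cannot coincide; equivalently, $\tau_i$ moves some element that $\langle\tau_j\rangle$ fixes. Hence $\tau_i\notin\langle\tau_j\rangle$ across invocations as well, and the global condition holds.

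I expect the parity count (forcing $q$ even so that every even-length cycle can be paired for Lemma \ref{lem3cycles2}) and the cross-invocation distinctness argument to be where all the content lies; the rest is the routine assembly of the pieces. The degenerate case $\sigma=\mathrm{id}$ yields the empty product and requires no argument.
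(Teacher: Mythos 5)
Your proposal is correct and follows essentially the same route as the paper's proof: decompose $\sigma^{-1}$ into disjoint cycles, pair up the even-length cycles (which is possible since $\sigma^{-1}\in A_n$ forces their number to be even), and invoke Lemmas \ref{lem3cycles1} and \ref{lem3cycles2}. Your explicit verification that $\tau_i\notin\langle\tau_j\rangle$ holds across different invocations (because factors from distinct disjoint cycles move disjoint subsets of $\{1,\ldots,n\}$ besides $x$) is a detail the paper leaves implicit, and it is a welcome addition.
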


\begin{proof}
Given $\sigma \in A_n$, we know we can write $\sigma^{-1}$ as a product of disjoint cycles. Moreover, after rearrangement (if needed) we can put all the appearing even cycles (if any) in pairs. Hence, Lemmas \ref{lem3cycles1} and \ref{lem3cycles2}  guarantee the desired factorization of $\sigma^{-1}$.
\end{proof}

\subsection{Products of $p$-cycles, $p>5$} 

In order to give an answer to what would happen if the machine now swapped $p$ brains cyclically, where $p$ is a prime larger than three, we need to set some notation first. 

Fix $n, p \in \N$, where $p>3$ is prime and $n>2$.
Then, we define the following ordered lists of numbers
\begin{align*}
 [x]_{n,p} &= x_1~x_2~\ldots x_{p-3}\\
 [x]_{n,p}^{-1} &= x_{p-3}~\ldots x_2~x_1
\end{align*}
where $x_i \gg n$, for all $i= 1, 2, \ldots, p-3$. We will use these lists to write cycles. For example $(1~2~[x]_{3,7}) = (1~2~x_1~x_2~x_3~x_4)$, where $x_1, x_2, x_3, x_4 \gg 3$.

\begin{theorem}\label{thmpcycles}
Let $n\in \N$ and $p$ be a prime such that $p>3$ and $n>2$. If $\sigma \in A_n$, then, $\sigma^{-1}$ can be written as a product of $p$-cycles
\[
\sigma^{-1} = \tau_1 \tau_2 \cdots \tau_t,
\]
where $\tau_i \in A_{n+(p-3)} \setminus A_n$,  for all $i=1,2,\ldots, t$, and $\tau_i \notin <\tau_j>$, for all $i\neq j$.
\end{theorem}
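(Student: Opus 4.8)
The plan is to reduce everything to a single \emph{atomic identity}: that any $3$-cycle on three ``real'' elements can be written as a product of exactly two $p$-cycles, each moving all $p-3$ extra elements $x_1,\dots,x_{p-3}$. Granting this, the theorem follows the same outline as the $p=3$ case (Lemmas \ref{lem3cycles1}, \ref{lem3cycles2} and Theorem \ref{thm3cycles}): decompose $\sigma^{-1}$ into disjoint cycles, expand each into $3$-cycles, and finally expand each $3$-cycle via the atom. The atom plays here the role that the base identity $(1~2~3)=(x~3~1)(x~1~2)$ played for $p=3$; the reason $p=3$ was special is precisely that there $p-3=0$, whereas for $p>3$ the $p-3$ extra points give exactly the room needed.

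Concretely, for distinct real $a,b,c$ I would use
\[
(a~b~c) = (a~x_1~x_2~\cdots~x_{p-3}~c~b)\,(a~c~b~x_{p-3}~x_{p-4}~\cdots~x_1),
\]
which one verifies directly: each factor is a $p$-cycle since $3+(p-3)=p$, and both have support $\{a,b,c\}\cup\{x_1,\dots,x_{p-3}\}$. Because a $p$-cycle is an even permutation and each factor moves every $x_i$, each factor lies in $A_{n+(p-3)}\setminus A_n$, as required.

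Next I would arrange the decomposition of $\sigma^{-1}$ so that the $3$-cycles appearing have \emph{pairwise distinct real triples}. For an odd cycle I would use the overlapping factorization $(1~2~\cdots~k)=(1~2~3)(3~4~5)\cdots(k-2~k-1~k)$; for a pair of even cycles (which occur in even number because $\sigma\in A_n$) I would imitate Lemma \ref{lem3cycles2}, peeling off two transpositions via $(a_1~a_2)(b_1~b_2)=(a_1~a_2~b_1)(a_2~b_1~b_2)$ and handling the two remaining odd cycles as above. Since disjoint cycles act on disjoint real points and the triples within each cycle are internally distinct, \emph{all} real triples across the whole product are distinct. Consequently the supports of $p$-cycles coming from different atoms differ (the support recovers its triple by intersecting with the reals), so none of them is a power of another; the only same-support pair is the two factors $\alpha,\beta$ inside one atom.

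The one genuinely delicate point is therefore the within-atom condition $\alpha\notin\langle\beta\rangle$. For this I would observe that if $\alpha=\beta^m$ then $m$ is forced by $\beta^m(a)=\alpha(a)=x_1$, which gives $m\equiv -1\pmod p$; but $\beta^{-1}=(a~x_1~\cdots~x_{p-3}~b~c)$ differs from $\alpha=(a~x_1~\cdots~x_{p-3}~c~b)$ only in the order of $b$ and $c$, so $\alpha\neq\beta^{-1}$ and hence $\alpha\notin\langle\beta\rangle$ (equivalently $\beta\notin\langle\alpha\rangle$, since for prime $p$ two $p$-cycles generate the same cyclic group iff one is a power of the other). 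Concatenating the atom-expansions of all the $3$-cycles then yields $\sigma^{-1}=\tau_1\cdots\tau_t$ with every $\tau_i\in A_{n+(p-3)}\setminus A_n$ and $\tau_i\notin\langle\tau_j\rangle$ for $i\neq j$. I expect the careful bookkeeping of the distinct-triples claim, rather than any single computation, to be where most of the care is needed.
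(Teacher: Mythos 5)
Your proposal is correct and follows essentially the same route as the paper: decompose $\sigma^{-1}$ into disjoint cycles and then into overlapping $3$-cycles (pairing the even cycles), expand each $3$-cycle into two $p$-cycles that each move all $p-3$ new symbols, and verify the subgroup condition by comparing supports, with the within-atom pair handled exactly as the paper does --- the exponent is forced to be $-1$ and $\beta^{-1}\neq\alpha$. The only cosmetic difference is that the paper converts the peeled-off product of two transpositions coming from a pair of even cycles directly into two $p$-cycles, whereas you pass through two $3$-cycles first.
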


\begin{proof}
We will first prove that every element $\sigma \in A_n$ can be written as a product of $p$-cycles, then we will prove that the factors used satisfy the conditions claimed in the theorem.

We know that every element in $A_n$ can be written as a product of odd cycles and an even number of even cycles. We will first deal with odd cycles. For $k=3$ we can write
\[
(1~2~3) = (1~3~2~[x]_{n,p}^{-1})(2~1~3~[x]_{n,p}),
\]
and so we need $p-3$ new elements to write a $3$-cycle as a product of $p$-cycles. For $k>3$ we know that every $k$-cycle in $A_n$ can be written as a product of $\frac{k-1}{2}$ distinct $3$-cycles as follows:
\begin{equation}\label{eqnfactor3cycles}
(a_1~a_2 \cdots a_k) = (a_1~a_2 ~a_3) (a_3~a_4 ~a_5) \cdots (a_{k-2}~a_{k-1}~a_k).
\end{equation}

So, if $\sigma \in A_n$ is a $k$-cycle with $k>3$, then we write it as in Equation (\ref{eqnfactor3cycles}). After that we write each of the $3$-cycles in that product as a product of two $p$-cycles, as discussed in the case $k=3$ above. Hence, we can write $\sigma$ as a product of $p$-cycles by incorporating $p-3$ new elements.

For products of two even cycles we let $\sigma=(a_1~a_2 \cdots a_r)$ and $\tau = (b_1~b_2 \cdots b_s)$, where $r$ and $s$ are even. Note that $\sigma \tau \in A_n$ can be written as
\begin{align*}
\sigma \tau &=   (a_1 ~a_2)  (b_1~b_2) ~ (a_2~a_3 \cdots a_{r}) (b_2~b_3 \cdots b_s) \\
&=   (b_2~b_1 ~a_2~[x]_{n,p}^{-1})  (a_2~a_1~b_1~[x]_{n,p}) ~ (a_2~a_3 \cdots a_{r}) (b_2~b_3 \cdots b_s) 
\end{align*}
where the first two cycles need $p-3$ new elements to have length $p$, and all the other cycles involved are odd, and thus can be written as products of $p$-cycles.

In order to see that all the $p$-cycles obtained in the final product describing $\sigma^{-1}$ generate distinct subgroups of $A_{n+(p-3)}$ we notice that, in the majority of the cases, a pair of these $p$-cycles will not  `move' the same $p$ elements, and thus the groups they generate would intersect trivially. Thus, the only case that should be analyzed is when we get two cycles of the form $\tau_1=(1~3~2~[x]_{n,p}^{-1})$ and $\tau_2=(2~1~3~[x]_{n,p})$. In this case, if we assumed that these elements generate the same subgroups then $\tau^i_2=\tau_1$, for some $i$. However, if we look at $\tau^i_2(2)$ we can see that the only possibility for $\tau^i_2=\tau_1$ to happen would be when $i=p-1$, but it is easy to see that $\tau^{p-1}_2=\tau_2^{-1} \neq \tau_1$. This finishes the proof.
\end{proof}

We conclude that as long as the brain-swapping machine Professor Farnsworth and Amy build swaps a prime number of brains cyclically, they can always fix the chaos created by incorporating enough extra characters to the mix. 



\begin{thebibliography}{99}
\bibitem{Gz} Casey Chan, Futurama writer invented a new math theorem just to use in the show (2010)  \verb+http://gizmodo.com/5618502/futurama-writer-invented-a-new-math-theorem-just-to-use-in-the-show+.

\bibitem{EHN} Ron Evans, Lihua Huang, Tuan Nguyen, Keeler's theorem and products of distinct transpositions, \emph{Amer. Math. Monthly} \textbf{121} (2014) 
136--144. 

\bibitem{FT} Hristo Georgiev, The Futurama theorem explained. \emph{The Commutator} \textbf{2} (2010) 18--20.

\bibitem{AMS} Tony Phillips, Math in the Media, Amer. Math. Soc., Original math on Futurama (2010), \verb+http://www.ams.org/news/math-in-the-media/10-2010-media+.

\bibitem{WGA} Previous Nominees \& Winners of the Writers Guild Awards (last accessed on 08/15/16). \verb+http://awards.wga.org/wga-awards/previous-nominees-winners+.

\bibitem{SS}  Simon Singh,  \emph{The Simpsons and Their Mathematical Secrets}. Bloomsbury, NY, 2013.

\bibitem{IS} The prisoner of Benda.  The Infosphere, the Futurama Wiki (last accessed on 08/15/16).  \verb+http://theinfosphere.org/The_Prisoner_of_Benda+.
\end{thebibliography}
\end{document}